% !TeX encoding = ISO-8859-1
% !TeX TS-program = pdflatex
% !BIB program = bibtex
% !TeX spellcheck = en_US

\documentclass[a4paper]{amsart}

\usepackage{amssymb,amscd}
\usepackage{mathrsfs} 
\usepackage[all]{xy}

\usepackage[
%	draft,
%	colorlinks,
backref,
pdfauthor={Han Wu},  % add other authors
pdftitle={Non-invariance of weak approximation with Brauer-Manin obstruction for surfaces},
]{hyperref}
\usepackage[alphabetic,backrefs,lite]{amsrefs}  % for bibliography, can choose nobysame

% % % % % % % % % % % % % % % % % % % % % % % % % % % % % % % % % % % % % % % % % % % % % % % % % % % % % % % % % % % % % %5

% % % % % % % % % % % % % % % % % % % % % %
\voffset=-8mm  %-10mm
\vsize=240mm  %225mm
\textheight=240mm  %225mm
\hsize=138mm  %150mm  %138mm 146mm
\textwidth=138mm  %150mm  %138mm 146mm
\hoffset=-4mm  %-10mm
\parskip=4pt
\parindent=12pt
% % % % % % % % % % % % % % % % % % % % % % % %

\theoremstyle{plain}
\theoremstyle{definition}

% % % \numberwithin{equation}{section}
% % % \newtheorem{theorem}[equation]{Theorem} 
% % % etc.
% non-contraction
\newtheorem{theorem}{Theorem}[section]
\newtheorem{thm}{Theorem}[section]
\newtheorem{lemma}[theorem]{Lemma}

\newtheorem{prop}[theorem]{Proposition}

\theoremstyle{definition}
\newtheorem{definition}[theorem]{Definition}

\newtheorem{qu}[theorem]{Question}

\newtheorem{conjecture}[theorem]{Conjecture}

\theoremstyle{remark}
\newtheorem{remark}[theorem]{Remark}

%=========================================================================================

% numbers
\renewcommand{\AA}{\mathbb{A}}

\newcommand{\QQ}{\mathbb{Q}}

\newcommand{\GG}{\mathbb{G}}
\newcommand{\PP}{\mathbb{P}}

\newcommand{\Lcal}{{\mathcal L}}
\newcommand{\Ocal}{{\mathcal O}}

\newcommand{\Lscr}{{\mathscr L}}

%=============================================================================
% Mathematik-Operatoren
\DeclareMathOperator{\Gal}{Gal}

\DeclareMathOperator{\Spec}{Spec}
\DeclareMathOperator{\pr}{pr}
%\DeclareMathOperator{\et}{\acute{e}t}

% This is for resizeable \Sha
\usepackage[OT2,T1]{fontenc}
\DeclareSymbolFont{cyrletters}{OT2}{wncyr}{m}{n}
\DeclareMathSymbol{\Sha}{\mathalpha}{cyrletters}{"58}
%\input{../characters}
%--------------------------------------------------------------------------

\newcommand{\defi}[1]{\textsf{#1}} % for defined terms

\newcommand{\BM}{Brauer-Manin }

\newcommand{\Br}{\textup{Br}}
\newcommand{\et}{\textup{\'et}}

\makeatletter
\g@addto@macro\bfseries{\boldmath}  % This makes math in section titles bold.
\makeatother

\setlength{\parindent}{0em}

\begin{document}
	
	\begin{title}
		{Non-invariance of weak approximation with Brauer-Manin obstruction for surfaces}  %\'etale
	\end{title}
	\author{Han Wu}
	\address{Hubei University,
		Faculty of Mathematics and Statistics,
		Hubei Key Laboratory of Applied Mathematics,
		No. 368, Friendship Avenue, Wuchang District, Wuhan, 
		Hubei, 430062, P.R.China.}
	\email{wuhan90@mail.ustc.edu.cn}
	\date{}
	%\thanks{The author was partially supported by USTC}
	\subjclass[2020]{Primary 11G35; Secondary 14G12, 14F22, 14G05.}
	% 11G05, , 14H25, 14H52, 14K15, 14J30
	\keywords{rational points, weak approximation, Brauer-Manin obstruction, weak approximation with Brauer-Manin obstruction.}

	%\thanks{The authors were partially supported by University of Science and Technology of China}
	%\thanks{\textit{MSC 2010} : 11G35 14G05  14G25 14J20}

	% % % ----------------------------------------------------------------------

	% % % ----------------------------------------------------------------------

	\begin{abstract} 
		In this paper, we study the property of weak approximation 
		with \BM obstruction for surfaces with respect to field extensions of number fields. For any nontrivial extension of number fields $L/K,$ assuming a conjecture of M. Stoll, we construct a smooth, projective, and geometrically connected surface over $K$ such that it  satisfies weak approximation with \BM obstruction %, \'etale \BM obstruction 
		off all archimedean places, while its base change to $L$ fails. Then we illustrate this construction with an explicit unconditional example.
	\end{abstract} 
	
	\maketitle

	\section{Introduction}
	
	\subsection{Background}
		For a proper scheme $X$ over a number field $K,$ if its $K$-rational points set $X(K)$ is nonempty, then its adelic points set $X(\AA_K)$ is nonempty. We assume $X(K)\neq\emptyset.$ Let $\Omega_K$ be the set of all nontrivial places of $K,$ and let $S\subset \Omega_K$ be a finite subset. Let $\pr^S\colon\AA_K\to \AA_K^S$ be the natural projection of ad\`eles to ad\`eles without $S$ components, which induces a natural projection $\pr^S\colon X(\AA_K)\to X(\AA_K^S).$ By the diagonal embedding, we always view $X(K)$ as a subset of $X(\AA_K)$ (respectively of $X(\AA_K^S)$). 
	We say that $X$
	satisfies \defi{weak approximation} (respectively \defi{weak approximation off $S$}) if $X(K)$ is dense in $X(\AA_K)$ (respectively in $X(\AA_K^S)$), cf. \cite[Chapter 5.1]{Sk01}. Manin \cite{Ma71} used the Brauer group of $X$ to define a closed subset $X(\AA_K)^{\Br}\subset X(\AA_K),$ and showed that this closed subset can explain some failures of nondensity of $X(K)$ in $X(\AA_K^S).$  The global reciprocity law gives an inclusion: $X(K)\subset X(\AA_K)^\Br.$ 
	We say that $X$ satisfies \defi{weak approximation with Brauer-Manin obstruction} (respectively \defi{with Brauer-Manin obstruction off $S$}) if $X(K)$ is dense in $X(\AA_K)^\Br$ (respectively in $pr^S(X(\AA_K)^\Br)$). 
	For a smooth, projective, and geometrically connected curve $C$ defined over a number field $K,$ we assume that the Tate-Shafarevich group and the rational points set of its Jacobian are both finite. By the dual sequence of Cassels-Tate, Skorobogatov \cite[Chapter 6.2]{Sk01} and Scharaschkin \cite{Sc99} independently observed that $C(K)=pr^{\infty_K}(C(\AA_K)^\Br).$ Stoll \cite{St07} generalized this observation, and made a conjecture that for any smooth, projective, and geometrically connected curve, it satisfies weak approximation with Brauer-Manin obstruction off $\infty_K\colon$ see Conjecture \ref{conjecture Stoll} for more details.
	
	\subsection{Question}
	Let $L/K$ be a nontrivial extension of number fields. Let $S\subset\Omega_K$ be a finite subset, and  let $S_L\subset \Omega_L$ be the subset of all places above $S.$ Given a smooth, projective, and geometrically connected variety $X$ over $K,$ let $X_L=X\times_{\Spec K} {\Spec L}$ be the base change of $X$ to $L.$ In this paper, we consider the following question.
	\begin{qu}\label{question on WA1}
		If the variety $X$ has a $K$-rational point, and satisfies weak approximation with Brauer-Manin obstruction off $S,$ must $X_L$ also satisfy weak approximation with Brauer-Manin obstruction off $S_L?$	
	\end{qu}

	\subsection{A negative answer to Question \ref{question on WA1}}
	For any number field $K,$ 	
	assuming Stoll's conjecture, Liang\cite{Li21} found a quadratic extension $L,$ and constructed	
	a $3$-fold to give a negative answer to Question \ref{question on WA1}.  When $L=\QQ(\sqrt{5})$ and $K=\QQ,$ using the construction method, he gave an unconditional example with explicit equations  in loc. cit.  The author \cite{Wu22b} generalized his argument to any nontrivial extension of number fields. The varieties constructed there, are $3$-folds. In this paper, we will prove the same statement for smooth, projective, and geometrically connected surfaces.

	For any nontrivial extension of number fields $L/K,$ assuming Stoll's conjecture, we have the following theorem to give a negative answer to Question \ref{question on WA1}.
	\begin{thm}[Theorem \ref{theorem main result: non-invariance of weak approximation with BMO}]
		For any nontrivial extension of number fields $L/K,$ assuming Stoll's conjecture, there exists a smooth, projective, and geometrically connected surface  $X$ defined over $K$ such that
		\begin{itemize}
			\item the surface $X$ has a $K$-rational point, and satisfies weak approximation with Brauer-Manin obstruction %, \'etale Brauer-Manin obstruction
			off $\infty_K,$
			\item the surface $X_L$ does not satisfy weak approximation with Brauer-Manin obstruction %, \'etale Brauer-Manin obstruction
			off $T$ for any finite subset $T\subset \Omega_L.$ 
		\end{itemize}
	\end{thm}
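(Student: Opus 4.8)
The plan is to construct a surface by starting from a curve that witnesses the subtle behavior of weak approximation with Brauer-Manin obstruction under the base change $L/K$, and then fibering it appropriately to raise the dimension down from the $3$-folds of Liang and the author to surfaces. Concretely, I would first find a smooth, projective, geometrically connected curve $C$ over $K$ whose arithmetic is sensitive to the extension $L/K$: the key phenomenon to engineer is that $C$ has a $K$-rational point (so $C(\AA_K)^{\Br}\neq\emptyset$ and, by Stoll's conjecture applied over $K$, $C(K)$ is dense in $\pr^{\infty_K}(C(\AA_K)^{\Br})$), while $C_L$ has adelic points surviving the Brauer-Manin obstruction that are not approximated by $L$-rational points. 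The standard source of such a curve is a torsor under an abelian variety, or a curve mapping to one, where a class in the Tate-Shafarevich group becomes relevant only after base change; the asymmetry between $K$ and $L$ is forced by arranging that a certain Brauer class defined over $L$ has no counterpart over $K$, so that the obstruction set shrinks over $K$ but not over $L$.

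Next, I would pass from this curve to a surface. The natural device is to take a family whose total space is a surface — for instance, a suitable fibration $X\to C$ or a conic bundle / family of genus-one curves over a base curve, chosen so that the Brauer group and rational points of $X$ are controlled by those of the fiber and the base. The geometric input I expect to use is that the Brauer-Manin set of $X$ can be read off from the arithmetic of the fibers lying over the Brauer-Manin set of the base, so that the failure of weak approximation with Brauer-Manin obstruction for $C_L$ propagates to $X_L$, while the good approximation property over $K$ (granted by Stoll's conjecture for the relevant curves) guarantees it for $X$. I would verify smoothness, projectivity, and geometric connectedness directly from the construction, and produce the required $K$-rational point from a rational point of the base lying in a smooth fiber with a rational point.

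The subtle point — and here is where the "off $S$" bookkeeping becomes essential — is that the negative conclusion over $L$ must hold off \emph{every} finite subset $T\subset\Omega_L$, not merely off $\infty_L$. This is strictly stronger than merely exhibiting a single place of failure, so the construction must ensure the obstruction is not concentrated at finitely many places but persists after projecting away any finite set; I would achieve this by arranging the bad adelic point to differ from the closure of $X_L(L)$ at infinitely many places, or by a local-to-global argument showing that removing finitely many components cannot recover density. The main obstacle I anticipate is precisely this dimension reduction combined with the "off all $T$" requirement: keeping the Brauer group computation tractable for a surface (rather than a $3$-fold, where there is more room to hide the relevant classes) while still forcing the approximation failure to survive projection away from any finite $T$ over $L$ and simultaneously keeping the approximation property intact over $K$ off $\infty_K$.

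To assemble the final argument, I would first invoke Stoll's conjecture to obtain the positive statement over $K$, reducing density of $X(K)$ in $\pr^{\infty_K}(X(\AA_K)^{\Br})$ to density on the base curve and good behavior in the fibers; then I would establish the negative statement over $L$ by exhibiting an explicit element of $\pr^T(X_L(\AA_L)^{\Br})$ outside the closure of $X_L(L)$ for arbitrary finite $T$, tracing it back to the corresponding failure for the witnessing curve $C_L$. Finally, to match the paper's promise of an unconditional example, I would specialize the construction to concrete equations — choosing explicit abelian varieties or elliptic curves with known Tate-Shafarevich and Mordell-Weil data over a specific $K$ and $L$ — so that Stoll's conjecture is not needed for that particular case and the surface's properties can be checked by hand.
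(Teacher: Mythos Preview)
Your proposal has a genuine gap in the mechanism for producing the failure over $L$. You are looking for a smooth, projective, geometrically connected curve $C$ over $K$ such that $C_L$ fails weak approximation with Brauer--Manin obstruction, and you propose to engineer this via a Brauer class or a $\Sha$-element that appears only over $L$. But Stoll's conjecture asserts precisely that no smooth, projective, geometrically connected curve over a number field fails weak approximation with Brauer--Manin obstruction off the archimedean places. So under the very hypothesis of the theorem (even if assumed only over $K$, you would not expect a counterexample over $L$), the object you are hunting for should not exist; and even unconditionally, no such curve is known.

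The paper's construction sidesteps this completely. The base curve $C$ is chosen only so that $C(K)$ and $C(L)$ are both finite and nonempty with $C(K)\subsetneq C(L)$, and so that Stoll holds for $C$ over $K$; nothing is asked of $C_L$ beyond having an extra rational point. The surface $X$ is a pencil of plane conics $X\to C$, arranged (via a map $\gamma\colon C\to\PP^1$ sending $C(K)$ to $\infty$ and $C(L)\setminus C(K)$ to $0$) so that the fibre over every $K$-point of $C$ is the smooth conic $x_0^2+x_1^2-x_2^2=0$, while the fibre over every point of $C(L)\setminus C(K)$ is the \emph{reducible} conic $x_0^2-x_1^2=0$, i.e.\ two lines meeting at a point. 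The failure over $L$ lives entirely in this singular fibre, not in the base: two lines meeting at a point have Brauer group equal to $\Br(L)$ (a short Mayer--Vietoris computation), so the Brauer--Manin set is the full adelic set, yet weak approximation fails off any finite $T$ simply because the curve has two geometrically irreducible components. A fibration lemma (if some fibre over a rational point of the base fails WA with BM off $T$, so does the total space) then transfers this to $X_L$. Over $K$, Stoll for $C$ plus the fact that every fibre over $C(K)$ is $\PP^1$ gives the positive statement via the companion fibration lemma.

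In particular, your concern about the ``off all $T$'' requirement dissolves: a reducible curve with constant Brauer group fails weak approximation with Brauer--Manin obstruction off \emph{every} finite set of places, so no delicate infinite-place argument is needed. The essential idea you are missing is that the bad curve is allowed to be singular (reducible), and it arises as a degenerate fibre rather than as the base.
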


	When $K=\QQ$ and $L=\QQ(i),$ using the construction method given in Theorem \ref{theorem main result: non-invariance of weak approximation with BMO}, we give an explicit unconditional example  in Section \ref{subsection main example1}. The smooth, projective, and geometrically connected surface  $X$ is defined by the following equations: 
	\begin{equation*}
		\begin{cases}
			(w_0w_2+w_1^2+16w_2^2)(x_0^2+x_1^2-x_2^2)+(w_0w_1+w_1w_2)(x_0^2-x_1^2)=0\\ 
			w_1^2w_2=w_0^3-16w_2^3
		\end{cases}
	\end{equation*}
	in $\PP^2\times \PP^2$ with bi-homogeneous coordinates $(w_0:w_1:w_2)\times(x_0:x_1:x_2).$

	\subsubsection{Main ideas behind our construction in the proof of Theorem \ref{theorem main result: non-invariance of weak approximation with BMO}}
	Let $L/K$ be a nontrivial extension of number fields. 
	We find a smooth, projective, and geometrically connected curve $C$ such that $C(K)$ and $C(L)$ are both finite, nonempty, and that $C(K)\neq C(L).$ Then we construct  a pencil $\beta \colon X\to C$ of curves parametrized by $C$ such that the fiber of each point in $C(K)$ is isomorphic to one given curve denoted by $C_\infty,$ and that the fiber of each point in $C(L)\backslash C(K)$  is isomorphic to another given curve denoted by $C_0.$ By combining some fibration arguments with the functoriality of Brauer-Manin pairing, the arithmetic properties of $C_\infty$ and $C_0$ will determine those of $X.$ We carefully choose the curves $C_\infty$ and $C_0$ to meet the needs of the theorem.

	\section{Notation}
	Let $K$ be a number field, and let $\Ocal_K$ be the ring of its integers. Let $\Omega_K$ be the set of all nontrivial places of $K.$ Let $\infty_K\subset \Omega_K$ be the subset of all archimedean places, and let $\Omega_K^f=\Omega_K\backslash \infty_K.$ 
	For $v\in \Omega_K,$ let $K_v$ be the completion of $K$ at  $v.$ 
	Given a finite subset $S\subset \Omega_K,$ let $\AA_K$ (respectively $\AA_K^S$) be the ring of ad\`eles (ad\`eles without $S$ components) of $K.$  We fix an algebraic closure $\overline{K}$ of $K,$ and let $\Gamma_K=\Gal(\overline{K}/K).$ We always assume that a field $L$ is a finite extension of $K.$ Let $S_L\subset \Omega_L$ be the subset of all places above $S.$

	In this paper, a \defi{$K$-scheme} will mean a reduced, separated scheme of finite type over $K,$ and all geometric objects are $K$-schemes. A \defi{$K$-curve}  will mean a proper $K$-scheme such that every irreducible components are of dimension one. In particular, a $K$-curve may have more than one irreducible component, and may have singular points. We say that a $K$-scheme is a \defi{$K$-variety} if it is geometrically integral. Be cautious that in our definition, a integral $K$-scheme may not be a variety, i.e. it may have multiple geometrically irreducible components. Given a proper $K$-scheme $X,$ if $X(\AA_K)\neq \emptyset,$ let $pr^S\colon X(\AA_K)\to X(\AA_K^S)$ be the projection induced by the natural projection $pr^S\colon\AA_K\to \AA_K^S.$ All cohomology groups in this paper are Galois or \'etale cohomology groups, and let $\Br(X)=H^2_{\et}(X,\GG_{m}).$

	\section{Stoll's conjecture for curves}

	For a smooth, projective, and geometrically connected curve $C$ defined over a number field $K,$ if the Tate-Shafarevich group and the rational points set of its Jacobian are both finite, then by combining the Cassels-Tate pairing with the Brauer evaluation pairing, Skorobogatov \cite[Chapter 6.2]{Sk01} and Scharaschkin \cite{Sc99} independently observed that $C(K)=pr^{\infty_K}(C(\AA_K)^\Br).$ Stoll \cite[Theorem 8.6]{St07} generalized this observation. Furthermore, he \cite[Conjecture 9.1]{St07} made the following conjecture.

	\begin{conjecture}\cite[Conjecture 9.1]{St07}\label{conjecture Stoll}
		For any smooth, projective, and geometrically connected curve $C$ defined over a number field $K,$  the set $C(K)$ is dense in $pr^{\infty_K}(C(\AA_K)^\Br).$ In particular, if $C(K)$ is finite, then $C(K)=pr^{\infty_K}(C(\AA_K)^\Br).$
	\end{conjecture}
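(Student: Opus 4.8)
The plan is to reduce the statement about the curve $C$ to a statement about its Jacobian $J = \mathrm{Jac}(C)$, where Cassels--Tate duality gives precise control of the Brauer--Manin set. First I would fix a point $P_0 \in C(K)$ (the substantive case has $C(K)\neq\emptyset$; when $C(K)=\emptyset$ the claim is that $\pr^{\infty_K}(C(\AA_K)^{\Br})$ is empty, which one attacks by viewing $C$ as a torsor under $J$ and tracking its class in $\Sha(J)$) and use $P_0$ to define the Albanese embedding $\iota\colon C \hookrightarrow J$. By functoriality of the Brauer--Manin pairing, pullback $\iota^\ast\colon \Br(J)\to\Br(C)$ satisfies $(\iota^\ast\alpha)(P_v)=\alpha(\iota(P_v))$, so a point of $C(\AA_K)$ orthogonal to $\iota^\ast\Br(J)$ maps to a point of $J(\AA_K)$ orthogonal to all of $\Br(J)$. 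Since $\iota^\ast\Br(J)\subseteq\Br(C)$, this yields $\iota_\ast\big(C(\AA_K)^{\Br}\big)\subseteq J(\AA_K)^{\Br}$.

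The key input is then the description of $J(\AA_K)^{\Br}$ for an abelian variety. Assuming $\Sha(J)$ is finite, the Cassels--Tate dual exact sequence identifies the Brauer--Manin pairing on $J(\AA_K)$ with the pairing against $\Sha(J)$ and shows that $J(\AA_K)_\bullet^{\Br}$, the Brauer--Manin set modulo the connected components at the archimedean places, coincides with the closure $\overline{J(K)}$ of the Mordell--Weil group. Projecting away from $\infty_K$ and intersecting with the image of $C$, one obtains $\pr^{\infty_K}(C(\AA_K)^{\Br}) \subseteq \pr^{\infty_K}\big(\overline{J(K)}\cap\iota(C)\big)$. The reverse inclusion $C(K)\subseteq\pr^{\infty_K}(C(\AA_K)^{\Br})$ is the global reciprocity law already recorded in the introduction, so everything comes down to showing the right-hand intersection contributes nothing beyond the closure of $\iota(C(K))$.

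When $C(K)$ is finite, this last step is essentially formal: a finite set of rational points is its own closure once the archimedean places are removed, and $J(K)\cap\iota(C)=\iota(C(K))$, which gives the ``in particular'' clause and recovers the Scharaschkin--Skorobogatov observation. This is the regime in which the assertion can actually be established, and it is the form in which it is used elsewhere in the paper.

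The main obstacle is that none of this is unconditional. The identification $J(\AA_K)_\bullet^{\Br}=\overline{J(K)}$ rests on the finiteness of $\Sha(J)$, which is open for essentially every positive-dimensional abelian variety outside the reach of Kolyvagin-type Euler system arguments; and the general density statement, without the hypothesis that $C(K)$ is finite, requires understanding the closure $\overline{J(K)}$ and its intersection with $\iota(C)$ for arbitrary Mordell--Weil rank, where the clean finite-set argument breaks down entirely. I expect no purely formal route around either difficulty, which is precisely why Stoll frames the assertion as Conjecture \ref{conjecture Stoll} and why it is taken here as a hypothesis rather than proved.
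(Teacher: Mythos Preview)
The statement you are addressing is a \emph{conjecture}, not a theorem: the paper does not prove it and offers no proof to compare against. It is cited verbatim from Stoll and then used throughout as a standing hypothesis (see the phrases ``assuming Conjecture \ref{conjecture Stoll}'' in Lemma \ref{lemma stoll conjecture} and Theorem \ref{theorem main result: non-invariance of weak approximation with BMO}). Your final paragraph acknowledges exactly this, so you already see the point; the earlier paragraphs are a fair summary of the Scharaschkin--Skorobogatov argument conditional on finiteness of $\Sha(J)$, which the paper also records in the remark following the conjecture, but they do not and cannot constitute a proof of the conjecture itself. There is no gap to diagnose and no alternative approach to compare---simply no proof exists here, in the paper or anywhere else.
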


	\begin{remark}
		It is well known that for an elliptic curve over $\QQ$ of analytic rank $0,$ its Mordell-Weil group and Tate-Shafarevich group are both finite. By the dual sequence of Cassels-Tate, Conjecture \ref{conjecture Stoll} holds for this elliptic curve.
	\end{remark}

	%In this paper, we make the following assumptions for the given number field extension $L$ of $K.$
	The following definition and lemma have already been stated in the paper \cite{Wu22e}. We give them below for the convenience of reading.
	\begin{definition}(\cite[Definition 4.3]{Wu22e})\label{definition curve of type}
		Given a smooth, projective, and geometrically connected curve $C$ defined over a number field $K,$ let $L/K$ be a nontrivial extension of number fields. We say that a triple $(C,K,L)$  is of \defi{type $I$} if 
		\begin{itemize}
			\item the sets $C(K)$ and $C(L)$ are both finite and nonempty,
			\item $C(K)\neq C(L),$
			\item Stoll's conjecture \ref{conjecture Stoll} holds for the curve $C.$ 
		\end{itemize}
	\end{definition}

	\begin{lemma}(\cite[Lemma 4.4]{Wu22e})\label{lemma stoll conjecture}
		Let $L/K$ be a nontrivial extension of number fields. Suppose that Conjecture \ref{conjecture Stoll} holds for all smooth, projective, and geometrically connected curves defined over $K.$ Then there exists a smooth, projective, and geometrically connected curve $C$ defined over $K$ such that the triple $(C,K,L)$ is of type $I.$
	\end{lemma}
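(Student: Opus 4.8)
The plan is to produce the curve $C$ by hand as a hyperelliptic curve of genus at least $2$, exploiting Faltings' theorem to get the finiteness conditions for free. Note first that, under the standing hypothesis that Conjecture \ref{conjecture Stoll} holds for every smooth, projective, and geometrically connected curve over $K$, the third condition in Definition \ref{definition curve of type} is automatic for \emph{any} such curve. Hence it suffices to build a single curve $C/K$ of genus $\ge 2$ for which $C(K)$ and $C(L)$ are both nonempty and $C(K)\neq C(L)$; the finiteness of $C(K)$ and of $C(L)=C_L(L)$ will then follow from Faltings' theorem applied over $K$ and over $L$ respectively, since the (geometric) genus is preserved under the base change $C\rightsquigarrow C_L$.

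To arrange the point-theoretic conditions, I would proceed as follows. Since $L/K$ is a finite extension of number fields, it is separable, so by the primitive element theorem we may write $L=K(\theta)$ with minimal polynomial $p(t)\in K[t]$ of degree $n=[L:K]\ge 2$. Fix any $a\in K$ and choose an auxiliary polynomial $r(x)\in K[x]$ so that $f(x)=(x-a)\,p(x)\,r(x)$ is squarefree of degree at least $5$; this is possible because $(x-a)$ and the irreducible $p(x)$ are coprime (as $a\in K$ is not a root of $p$), and one may always pad with $r$ to raise the degree while keeping $f$ squarefree. Let $C$ be the smooth projective model of the affine curve $y^2=f(x)$. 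Then $C$ is smooth, projective, and geometrically integral (hence geometrically connected), and its genus equals $\lfloor (\deg f-1)/2\rfloor\ge 2$.

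Two explicit points then finish the construction. The affine point $(a,0)$ is a $K$-rational point of $C$ (it is a smooth Weierstrass point because $a$ is a simple root of the squarefree $f$), so $C(K)\neq\emptyset$. On the other hand, since $p(\theta)=0$ we have $f(\theta)=0$, so $(\theta,0)$ is an $L$-rational point of $C$. As the $x$-coordinate is a $K$-rational function on $C$, every point of $C(K)$ has $x$-coordinate in $K\cup\{\infty\}$; but $x((\theta,0))=\theta\notin K$, so $(\theta,0)\in C(L)\setminus C(K)$. Therefore $C(K)\subsetneq C(L)$, and in particular $C(K)\neq C(L)$.

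Combining the above, $C(K)$ and $C(L)$ are finite (Faltings) and nonempty, $C(K)\neq C(L)$, and Stoll's conjecture holds for $C$ by hypothesis, so $(C,K,L)$ is of type $I$. The construction involves no genuine obstacle; the only points requiring care are purely elementary, namely ensuring that $f$ is squarefree of degree $\ge 5$ so that the model is smooth, geometrically connected, and of genus $\ge 2$, and checking that the exhibited $L$-point is genuinely not $K$-rational, which is immediate from its $x$-coordinate. The one structural input one cannot avoid is Faltings' theorem, which is what converts the genus bound into the required finiteness of $C(K)$ and $C(L)$.
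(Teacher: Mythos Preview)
Your proof is correct. Note, however, that the paper does not actually prove this lemma: it is stated with a citation to \cite[Lemma 4.4]{Wu22e} and no argument is given, so there is nothing in the present paper to compare against. Your explicit hyperelliptic construction---taking $L=K(\theta)$ via the primitive element theorem, setting $f(x)=(x-a)\,p(x)\,r(x)$ squarefree of degree $\ge 5$, and reading off the Weierstrass points $(a,0)\in C(K)$ and $(\theta,0)\in C(L)\setminus C(K)$---is a clean and self-contained way to obtain the required curve, with Faltings' theorem supplying the finiteness of $C(K)$ and $C(L)$. All the checks you flag (squarefreeness, geometric integrality, the $x$-coordinate argument forcing $(\theta,0)\notin C(K)$) are routine and go through as stated.
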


	The following lemma is a strong form of \cite[Lemma 6.1]{Wu22e}. It will be used to choose a dominant morphism from a given curve to $\PP^1.$

	\begin{lemma}\label{lemma choose base change morphism}
		Let $L/K$ be a nontrivial extension of number fields. Given a smooth, projective, and geometrically connected curve $C$ defined over $K,$ we assume that the triple $(C,K,L)$ is of type $I$ (Definition \ref{definition curve of type}). For any finite $K$-subscheme $R\subset \PP^1,$  there exists a dominant $K$-morphism $\gamma\colon  C\to \PP^1$ such that 
		\begin{itemize}{
				\item  $\gamma(C(K))=\{\infty\}\subset \PP^1(K),$
				\item $\gamma(C(L)\backslash C(K))=\{0\}\subset \PP^1(K),$
				%\item there exists some $P\in C(L)\backslash C(K)$ such that $\gamma(P)=0\in \PP^1(K),$ 
				%	\item $\gamma$ is \'etale over $R\bigcup \{0,\infty\}.$    
				\item $\gamma$ is \'etale over $R.$    }
		\end{itemize}
	\end{lemma}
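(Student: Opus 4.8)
The plan is to realise $\gamma$ as a pencil of divisors on $C$ and to spend the real effort keeping its branch locus disjoint from $R$. Since $K$ has characteristic $0$, any nonconstant $f\in K(C)^{\times}$ gives a dominant separable $K$-morphism $\gamma\colon C\to\PP^1$ whose fibre over a closed point $c$ is a divisor on $C$; as $\gamma$ is automatically flat and residue extensions are separable in characteristic $0$, being \'etale over $c$ is the same as that fibre being reduced, and I may replace $R$ by its finite set of closed points. So the whole statement reduces to choosing $f$ with the right zeros and poles and with reduced fibres over the finitely many points of $R$.

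First I would record the geometry of the points. By Definition \ref{definition curve of type}, $C(K)=\{P_1,\dots,P_m\}$ is finite, nonempty and consists of closed points of degree one, and $C(L)\backslash C(K)$ is finite and nonempty. Every point of $C(L)\backslash C(K)$ lies over a closed point of $C$ of degree $>1$ whose residue field admits a $K$-embedding into $L$; let $R_1,\dots,R_n$ be these finitely many closed points, so that $C(L)\backslash C(K)$ is exactly the set of $L$-points lying over $R_1,\dots,R_n$, and no closed point outside $\{P_i\}\cup\{R_j\}$ carries an $L$-point. Set $B=P_1+\dots+P_m$ and $A=R_1+\dots+R_n$, effective reduced $K$-divisors with disjoint support, both nonzero. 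It then suffices to produce $f\in K(C)^{\times}$ whose pole divisor contains $B$ and whose zero divisor contains $A$: a pole at each $P_i$ forces $\gamma(C(K))=\{\infty\}$, a zero at each $R_j$ forces $\gamma(C(L)\backslash C(K))=\{0\}$, and the remaining zeros and poles are harmless because they sit at closed points carrying no $L$-point.

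For the construction I would fix a $K$-divisor $D$ of large degree with $D-A$ and $D-B$ base-point-free (Riemann--Roch), and take $\gamma=[\,s_1:s_0\,]$ with $s_0\in H^0(C,\Ocal(D-B))$ and $s_1\in H^0(C,\Ocal(D-A))$, both viewed inside $H^0(C,\Ocal(D))$. Then, as sections of $\Ocal(D)$, one has $\operatorname{div}(s_0)\geq B$ and $\operatorname{div}(s_1)\geq A$; for generic nonproportional $s_0,s_1$ the two sections have no common zero, so $\gamma$ is a dominant morphism, each $P_i$ is a genuine pole and each $R_j$ a genuine zero, and the fibres of $\gamma$ are precisely the members $\operatorname{div}(a s_1+b s_0)$, $[a:b]\in\PP^1$, of the pencil $\langle s_0,s_1\rangle$. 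Thus $\gamma$ is \'etale over a closed point $c\in\PP^1$ exactly when the corresponding pencil member is reduced.

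The hard part is to force every pencil member lying over $R$ to be reduced while keeping the containments above, and to do this over the non-closed field $K$. For each of the finitely many $c\in R$, the locus in the parameter space $H^0(C,\Ocal(D-B))\times H^0(C,\Ocal(D-A))$ where the fibre over $c$ is non-reduced is a $\Gamma_K$-stable closed subset, and it is proper precisely because of generic smoothness of the general member of a base-point-free linear system, which holds in characteristic $0$ by Bertini; I would check that the relevant sublinear system is indeed base-point-free along $A$ and $B$ using that $A,B$ are disjoint and $D-A,D-B$ are base-point-free. Since $K$ is infinite, the parameter space has a $K$-rational point avoiding this finite union of proper closed subsets, and any such $(s_0,s_1)$ yields $\gamma$ unramified, hence \'etale, over all of $R$. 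I expect the only real delicacy to be this general-position step over $K$: verifying properness (equivalently, exhibiting after base change to $\overline{K}$ a single reduced fibre over each $c$) and the book-keeping that simultaneously enforcing reducedness over any points $0,\infty\in R$ amounts to taking the zeros along $A$ and the poles along $B$ to be simple, which the same genericity supplies.
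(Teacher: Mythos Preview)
Your argument is correct: the pencil construction together with the Bertini/genericity count over each closed point of $R$ does produce the required $\gamma$, and you have identified the only delicate point (properness of the bad locus in the parameter space, handled over $\overline{K}$ and descended by Galois stability).

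The paper, however, takes a much shorter route. It first constructs \emph{any} nonconstant $\phi\in K(C)^\times$ whose pole set contains $C(K)$, whose zero set contains $C(L)\setminus C(K)$, and whose zeros and poles are all simple; this already gives a morphism $\gamma_0\colon C\to\PP^1$ satisfying the first two bullets and \'etale over $\{0,\infty\}$, so its branch locus $B_0$ is a finite subset of $\PP^1\setminus\{0,\infty\}$. The key observation is that the automorphisms $\varphi_\lambda\colon(u:v)\mapsto(\lambda u:v)$ of $\PP^1$ fix both $0$ and $\infty$; hence for any $\lambda\in K^\times$ the composite $\gamma=\varphi_\lambda^{-1}\circ\gamma_0$ still sends $C(K)$ to $\infty$ and $C(L)\setminus C(K)$ to $0$, while its branch locus is $\varphi_\lambda^{-1}(B_0)$. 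Since $B_0$ and $R$ are finite and $B_0\subset\PP^1\setminus\{0,\infty\}$, one can choose $\lambda_0\in K^\times$ with $\varphi_{\lambda_0}(R)\cap B_0=\emptyset$, and then $\gamma$ is \'etale over $R$. This decouples the problem completely: one Riemann--Roch step to get $\gamma_0$, then a one-parameter search in $K^\times$ to avoid $R$. Your approach does everything at once inside the two-parameter family $(s_0,s_1)$ and therefore needs the Bertini analysis for every $c\in R$ simultaneously; it is more self-contained and would adapt to targets without such a convenient torus of automorphisms, but for $\PP^1$ the paper's post-composition trick is noticeably cleaner.
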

	
	\begin{proof}
		The proof is along the same idea as the proof of \cite[Lemma 6.1]{Wu22e}, where the statement was shown for $R\subset \PP^1\backslash\{0,\infty\}.$
		We will put one more condition for choosing a rational function. Let $K(C)$ be the function field of $C.$ % Let $M$ be the smallest Galois extension containing $L$ of $K,$ and let $\Gal(M/K)$ be the Galois group.
		Since $C(K)$ and $C(L)$ are both finite nonempty, and $C(K)\neq C(L),$ by the Riemann-Roch theorem, we can choose a rational function $\phi\in K(C)^\times\backslash K^\times$ such that
		\begin{itemize}
			\item the set of its poles contains $C(K),$
			\item the set of its zeros contains $C(L)\backslash C(K),$
			\item all poles and zeros are of multiplicity one.
		\end{itemize}
		Then this rational function $\phi$ gives a dominant $K$-morphism $\gamma_0\colon C\to \PP^1$ such that 
		\begin{itemize}
			\item $\gamma_0(C(L)\backslash C(K))=\{0\}\subset \PP^1(K),$
			\item $\gamma_0(C(K))=\{\infty\}\subset \PP^1(K),$
			\item $\gamma_0$ is \'etale over $\{0,\infty\}.$
		\end{itemize}
		Then the branch locus of $\gamma_0$ is finite and contained in $\PP^1\backslash\{0,\infty\}.$
		We can choose an automorphism $\varphi_{\lambda_0}\colon \PP^1\to \PP^1, (u:v)\mapsto (\lambda_0 u:v)$ with $\lambda_0\in K^\times$ such that the branch locus of $\gamma_0$ has no intersection with $\varphi_{\lambda_0}(R).$ Let $\gamma= (\varphi_{\lambda_0})^{-1}\circ\gamma_0.$ Then the morphism $\gamma$ is \'etale over $R,$ and satisfies other conditions. 
	\end{proof}

	\section{A negative answer to Question \ref{question on WA1}}
	For any number field $K,$ assuming Conjecture \ref{conjecture Stoll}, Liang \cite[Theorem 4.5]{Li21} found a quadratic extension $L,$ and constructed  a $3$-fold to give a negative answer to Question \ref{question on WA1}. The author \cite[Theorem 5.2.1]{Wu22b} generalized his result to any nontrivial extension of number fields. Although the strategies of these two papers are different, the methods used there are combining the arithmetic properties of Ch\^atelet surfaces with a construction method from Poonen \cite{Po10}. Thus 
	the varieties constructed there, are $3$-folds.  
	For any extension of number fields $L/K,$ assuming Conjecture \ref{conjecture Stoll}, in this section, we will construct a smooth, projective, and geometrically connected surface to give a negative answer to Question \ref{question on WA1}. The method that we will use, is to combine some fibration lemmas with the arithmetic properties of curves, whose irreducible components are projective lines. 
	
	\subsection{Preparation Lemmas} We state the following lemmas, which will be used for the proof of Theorem \ref{theorem main result: non-invariance of weak approximation with BMO}.

	The following fibration lemma has already been stated in the paper \cite{Wu22b}. We give them below for the convenience of the reader.
	%The following two fibration Lemmas  will be used for the proof of our main theorems. In this subsection, we will give two fibration lemmas. 
	\begin{lemma}(\cite[Lemma 5.1.1]{Wu22b})\label{lemma fiber criterion for wabm}
		Let $K$ be a number field, and let $S\subset \Omega_K$ be a finite subset.  Let $f\colon X\to Y$ be a $K$-morphism of proper $K$-varieties $X$ and $Y$. Suppose that
		\begin{enumerate}{
				\item\label{fiber criterion for wabm condition 1}  the set $Y(K)$ is finite,
				\item\label{fiber criterion for wabm condition 2}  the variety $Y$ satisfies weak approximation with Brauer-Manin obstruction off $S,$
				\item\label{fiber criterion for wabm condition 3}  for any $P\in Y(K),$ the fiber $X_P$ of $f$ over $P$ satisfies weak approximation off $S.$}
		\end{enumerate}
		Then the variety $X$ satisfies weak approximation with Brauer-Manin obstruction off $S.$
	\end{lemma}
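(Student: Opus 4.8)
The plan is to start from an arbitrary adelic point in $X(\AA_K)^\Br$ and manufacture a rational point of $X$ approximating it off $S$, by pushing the problem down to the base $Y$ along $f$ and then lifting it back into a single fiber. The engine is the functoriality of the Brauer-Manin pairing, and the leverage comes from the finiteness of $Y(K)$.

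First I would record the functoriality of the pairing. For $\alpha\in\Br(Y)$ and $(x_v)\in X(\AA_K)$, the compatibility $\sum_v\inv_v(f^*\alpha(x_v))=\sum_v\inv_v(\alpha(f(x_v)))$ shows that $f$ carries $X(\AA_K)^\Br$ into $Y(\AA_K)^\Br$: applying the defining condition of $X(\AA_K)^\Br$ to the class $f^*\alpha\in\Br(X)$ forces the right-hand sum to vanish for every $\alpha$. I would also note that $f$ is automatically proper, being a $K$-morphism from a proper scheme to a separated one, so each fiber $X_P$ is proper over $K$ and $X_P(\AA_K^S)=\prod_{v\notin S}X_P(K_v)$, which is what makes the later fiber approximation meaningful.

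Second, I would combine hypotheses \eqref{fiber criterion for wabm condition 1} and \eqref{fiber criterion for wabm condition 2}. Since $Y$ satisfies weak approximation with Brauer-Manin obstruction off $S$, the set $Y(K)$ is dense in $pr^S(Y(\AA_K)^\Br)$; but $Y(K)$ is finite, hence closed in the Hausdorff space $Y(\AA_K^S)$, and a dense closed subset is everything, so $pr^S(Y(\AA_K)^\Br)$ equals the diagonal image of the finite set $Y(K)$. Consequently, given any $(x_v)\in X(\AA_K)^\Br$, the image $(f(x_v))\in Y(\AA_K)^\Br$ satisfies $pr^S((f(x_v)))=P$ for a single $P\in Y(K)$; that is, $f(x_v)=P$ for every $v\notin S$. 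Thus $x_v\in X_P(K_v)$ for all such $v$, and $(x_v)_{v\notin S}$ is a genuine adelic point of the fiber $X_P$ off $S$. Finally I would invoke hypothesis \eqref{fiber criterion for wabm condition 3}: $X_P$ satisfies weak approximation off $S$, so $(x_v)_{v\notin S}$ can be approximated at any prescribed finite set of places by some $Q\in X_P(K)\subset X(K)$; because $X_P\hookrightarrow X$ is a closed immersion inducing the subspace topology on local points, the same $Q$ approximates $(x_v)$ off $S$ in $X(\AA_K^S)$. This yields density of $X(K)$ in $pr^S(X(\AA_K)^\Br)$.

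The step I expect to carry the weight is the second one, namely the collapse of $pr^S(Y(\AA_K)^\Br)$ onto the finite set $Y(K)$: this is precisely where the finiteness of $Y(K)$ and the weak-approximation-with-obstruction property of $Y$ conspire, and it is what converts a Brauer-Manin condition on $X$ into the concrete statement that $f$ is constant off $S$. The remainder is a clean fibration argument; the only point demanding care is checking that $f$ is proper, so that the fibers are proper and their adelic points off $S$ are literally the product of local points.
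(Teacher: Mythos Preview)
The paper does not include its own proof of this lemma; it is quoted from \cite[Lemma 5.1.1]{Wu22b} for the reader's convenience, with no argument reproduced. Your proof is correct and is precisely the standard fibration argument one expects in this setting: push $X(\AA_K)^{\Br}$ into $Y(\AA_K)^{\Br}$ via functoriality of the pairing, use finiteness of $Y(K)$ together with hypothesis~(2) to collapse $pr^S(Y(\AA_K)^{\Br})$ onto the discrete set $Y(K)$, deduce that the projected adelic point lies in a single proper fiber $X_P$, and finish with weak approximation off $S$ on that fiber. There is nothing further to compare against here.
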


	The following fibration lemma can be viewed as a modification of \cite[Lemma 5.1.2]{Wu22b} to fit into our context.
	
	\begin{lemma}\label{lemma fiber criterion for not wabm}
		Let $K$ be a number field, and let $S\subset \Omega_K$ be a finite subset.  Let $f\colon X\to Y$ be a $K$-morphism of proper $K$-varieties $X$ and $Y$. We assume that
		\begin{enumerate}{
				\item\label{fiber criterion for not wabm condition 1}  the set $Y(K)$ is finite,
				\item\label{fiber criterion for not wabm condition 2} there exists some $P\in Y(K)$ such that the fiber $X_P$ of $f$ over $P$ does not satisfy weak approximation with Brauer-Manin obstruction off $S.$   }
		\end{enumerate}
		Then the variety $X$ does not satisfy weak approximation with Brauer-Manin obstruction off $S.$
	\end{lemma}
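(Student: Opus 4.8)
The plan is to transport the adelic point that witnesses the failure of weak approximation with \BM obstruction off $S$ for the fiber $X_P$ up to the total space $X$ by functoriality, and then to use the finiteness of $Y(K)$ to argue that any rational point of $X$ approximating this transported point off $S$ must already lie in the fiber $X_P$, yielding a contradiction. Concretely, by hypothesis~(\ref{fiber criterion for not wabm condition 2}) I first fix $P\in Y(K)$ and an adelic point $(Q_v)_v\in X_P(\AA_K)^{\Br}$ whose projection $\pr^S\big((Q_v)_v\big)$ does not lie in the closure of $X_P(K)$ in $X_P(\AA_K^S).$ Since $Y$ is separated and $P$ is a $K$-rational section, $\{P\}\hookrightarrow Y$ is a closed immersion, so the scheme-theoretic fiber $\iota\colon X_P=f^{-1}(P)\hookrightarrow X$ is a closed immersion of proper $K$-schemes; in particular $\iota$ realizes $X_P(\AA_K^S)$ as a closed topological subspace of $X(\AA_K^S)$ for the adelic topologies.

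Next I would verify that, viewed inside $X$ via $\iota,$ the point $(Q_v)_v$ still satisfies the \BM condition on $X.$ This is pure functoriality of the Brauer evaluation pairing: for every $\mathcal{A}\in\Br(X)$ one has $\inv_v\big(\mathcal{A}(\iota(Q_v))\big)=\inv_v\big((\iota^*\mathcal{A})(Q_v)\big)$ place by place, and since $\iota^*\mathcal{A}\in\Br(X_P)$ while $(Q_v)_v$ is orthogonal to $\Br(X_P),$ summing over all $v$ gives $\sum_v\inv_v\big(\mathcal{A}(\iota(Q_v))\big)=0.$ Hence $(Q_v)_v$ defines a point of $X(\AA_K)^{\Br},$ and $\pr^S\big((Q_v)_v\big)\in \pr^S\big(X(\AA_K)^{\Br}\big).$ It therefore suffices to prove that this point is not in the closure of $X(K)$ in $X(\AA_K^S).$

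The decisive step, where hypothesis~(\ref{fiber criterion for not wabm condition 1}) is used, is a localization over the base. The diagonal embedding $Y(K)\hookrightarrow Y(\AA_K^S)$ is injective (distinct $K$-points remain distinct in $Y(K_v)$ for $v\notin S$), so $\pr^S(Y(K))$ is a finite, hence discrete and closed, subset of the Hausdorff space $Y(\AA_K^S);$ choose an open neighborhood $U$ of $\pr^S(P)$ with $U\cap\pr^S(Y(K))=\{\pr^S(P)\}.$ The morphism $f$ induces a continuous map $f_*\colon X(\AA_K^S)\to Y(\AA_K^S)$ sending $\pr^S\big((Q_v)_v\big)$ to $\pr^S(P)\in U,$ because $f(Q_v)=P$ at every place; thus $f_*^{-1}(U)$ is an open neighborhood of $\pr^S\big((Q_v)_v\big).$ Any $x\in X(K)$ with $\pr^S(x)\in f_*^{-1}(U)$ satisfies $\pr^S(f(x))\in U\cap\pr^S(Y(K))=\{\pr^S(P)\},$ forcing $f(x)=P,$ i.e. $x\in X_P(K).$ Consequently, if $\pr^S\big((Q_v)_v\big)$ were in the closure of $X(K),$ then intersecting approximating neighborhoods with $f_*^{-1}(U)$ would exhibit it as a limit of points of $X_P(K);$ as $X_P(\AA_K^S)\hookrightarrow X(\AA_K^S)$ is a topological embedding, $\pr^S\big((Q_v)_v\big)$ would lie in the closure of $X_P(K)$ in $X_P(\AA_K^S),$ contradicting the choice of $(Q_v)_v.$ Hence $X$ does not satisfy weak approximation with \BM obstruction off $S.$

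I expect the only genuinely delicate point to be this localization: one must make precise that convergence of rational points of $X$ toward a point supported on the fiber forces the base points to stabilize at $P,$ which rests on the finiteness of $Y(K)$ together with the injectivity of the diagonal embedding off $S.$ The functoriality of the \BM pairing and the fact that the fiber over a $K$-rational point is a closed subscheme are standard and should present no difficulty; the argument closely parallels the cited \cite[Lemma 5.1.2]{Wu22b}, the modification being that here it is a single fiber rather than a generic fibration that drives the conclusion.
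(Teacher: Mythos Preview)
Your proof is correct and follows essentially the same approach as the paper: push the witnessing adelic point from $X_P$ into $X(\AA_K)^{\Br}$ by functoriality of the Brauer pairing, then use finiteness of $Y(K)$ to force any approximating rational point of $X$ to land in the fiber $X_P$. The only cosmetic difference is that the paper isolates $P$ via a Zariski open $V_{P_0}\subset Y$ with $V_{P_0}(K)=\{P_0\}$ and builds an explicit product open set $N\subset X(\AA_K)$, whereas you isolate $P$ via an adelic open neighborhood in $Y(\AA_K^S)$ and pull back along the continuous map $f_*$; both achieve the same localization.
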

	
	\begin{proof}
		By Assumption (\ref{fiber criterion for not wabm condition 2}), take a $P_0\in Y(K)$ such that the fiber $X_{P_0}$ does not satisfy weak approximation with Brauer-Manin obstruction off $S.$ Then there exist a finite nonempty subset $S'\subset\Omega_K\backslash S$ and a nonempty open subset 	
		$L=\prod_{v\in S'}U_v\times \prod_{v\notin S'}X_{P_0}(K_v)\subset X_{P_0}(\AA_K)$ such that $L\cap X_{P_0}(\AA_K)^{\Br}\neq \emptyset,$ but that $L\cap X_{P_0}(K)=\emptyset.$
		By Assumption (\ref{fiber criterion for not wabm condition 1}), the set $Y(K)$ is finite, so we can take a Zariski open subset $V_{P_0}\subset Y$ such that $V_{P_0}(K)=\{P_0\}.$ For any $v\in S',$ since $U_v$ is open in $X_{P_0}(K_v)\subset f^{-1}(V_{P_0})(K_v),$ we can take an open subset $W_v$ of $f^{-1}(V_{P_0})(K_v)$ such that $W_v\cap X_{P_0}(K_v)=U_v.$ 	
		Consider the open subset $N=\prod_{v\in S'}W_v\times \prod_{v\notin S'}X(K_v)\subset X(\AA_K),$ then $L\subset N.$ By the functoriality of Brauer-Manin pairing, we have $X_{P_0}(\AA_K)^{\Br}\subset  X(\AA_K)^{\Br}.$ So the set $N\cap X(\AA_K)^{\Br}\supset L\cap X_{P_0}(\AA_K)^{\Br},$ is nonempty. But $N\cap X(K)=N\cap X_{P_0}(K)=L\cap X_{P_0}(K)
		=\emptyset,$ which implies that $X$ does not satisfy weak approximation with Brauer-Manin obstruction off $S.$
	\end{proof}

	The following lemma states that a $K$-scheme with multiple geometrically irreducible components will violate weak approximation.
	\begin{lemma}\label{lemma prevariety not satifying WA}
		Let $K$ be a number field, and let $S\subset \Omega_K$ be a finite subset. Let $X$ be a $K$-scheme, which is not a $K$-variety, i.e. it has multiple geometrically irreducible components.  We assume $\prod_{v\in \Omega_K}X(K_v)\neq \emptyset,$ then the scheme
		$X$ does not satisfy weak approximation off $S.$
	\end{lemma}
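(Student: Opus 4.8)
The plan is to exhibit a nonempty open subset of $\prod_{v\notin S}X(K_v)$ that is disjoint from the closure of $X(K)$, by separating the $K$-rational points (which I will force onto the singular locus) from suitable local points lying on a single geometric component. First I would reduce to the case that $X$ is integral, so that the statement is actually true: if the geometric components were all defined over $K$ (e.g. $X=\PP^1\sqcup\PP^1$) weak approximation could hold, so the content is the case where $X$ is irreducible over $K$. Then the geometric irreducible components $Y_1,\dots,Y_d$ (with $d\geq 2$) form a single orbit under $\Gamma_K$; equivalently, the algebraic closure $K'$ of $K$ in the function field $K(X)$ is a nontrivial finite extension of $K$, of degree $d$.

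The key structural step is to show $X(K)\subseteq X^{\mathrm{sing}}(K)$, where $X^{\mathrm{sing}}\subsetneq X$ is the (proper, closed) singular locus. A smooth geometric point lies on a \emph{unique} irreducible component, because a regular local ring is a domain. Hence, if some $P\in X(K)$ had smooth geometric image $\bar P$, the unique component through $\bar P$ would be stable under $\Gamma_K$ (as $\bar P$ is $\Gamma_K$-fixed), contradicting the transitivity of the Galois action on the $d\geq 2$ components. Therefore every $K$-point is singular, so $X(K)$ is contained in the proper closed subset $X^{\mathrm{sing}}$.

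Next I would produce one place $v_0\notin S$ carrying a local point off $X^{\mathrm{sing}}$. The smooth locus $X^{\mathrm{sm}}$ is normal and integral with field of constants $K'$, hence a geometrically integral $K'$-variety, and $K'\hookrightarrow K(X)$ yields a morphism $X^{\mathrm{sm}}\to\Spec K'$. A geometrically integral variety over a number field has rational points over all but finitely many of its completions (spread out to a model and combine the Lang--Weil estimate with Hensel's lemma); meanwhile, by the Chebotarev density theorem there are infinitely many places $v_0$ of $K$ admitting a degree-one place $w_0$ of $K'$ above them, so that $K'_{w_0}=K_{v_0}$. Choosing such a $v_0\notin S$ and avoiding the finitely many bad completions gives a point of $X^{\mathrm{sm}}(K'_{w_0})=X^{\mathrm{sm}}(K_{v_0})$, i.e. a local point of $X$ at $v_0$ lying off $X^{\mathrm{sing}}$. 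Finally I would conclude: using the hypothesis $\prod_{v\in\Omega_K}X(K_v)\neq\emptyset$ to fill in the remaining coordinates, I form the nonempty open set $W=U_{v_0}\times\prod_{v\notin S,\,v\neq v_0}X(K_v)$ with $U_{v_0}=X(K_{v_0})\setminus X^{\mathrm{sing}}(K_{v_0})$; since the closure of $X(K)$ in $\prod_{v\notin S}X(K_v)$ lies in the closed set $\prod_{v\notin S}X^{\mathrm{sing}}(K_v)$, whose $v_0$-component misses $U_{v_0}$, the set $W$ meets $\prod_{v\notin S}X(K_v)$ but not that closure, so $X(K)$ is not dense.

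I expect the third step --- securing a local point off the singular locus at a place away from $S$ --- to be the main obstacle, since this is exactly where the hypothesis must interact with the global Galois structure: one has to rule out the degenerate possibility that every local point at every place is singular, which is why Chebotarev (to find a place where some component becomes rational) together with Lang--Weil and Hensel (to find an actual smooth point there) enter. In the intended application the components are projective lines, so this step is immediate and none of this machinery is needed.
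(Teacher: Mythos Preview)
Your claimed counterexample $X=\PP^1\sqcup\PP^1$ is not one: a $K$-point of $X$ lies on a single copy, so at \emph{every} place its local image is on that same copy; hence it cannot approximate an adelic point whose $v_1$-coordinate sits on the first copy and whose $v_2$-coordinate sits on the second (for any two places $v_1,v_2\notin S$). Thus $\PP^1\sqcup\PP^1$ does fail weak approximation off $S$, the lemma is correct as stated without any integrality hypothesis, and your proposed ``reduction to the integral case'' rests on a false premise. This is not a harmless slip: the curve $C\colon x_0^2-x_1^2=0$ to which the lemma is applied in Lemma~\ref{lemma two projective lines not satifying WA} is the union of two $K$-rational lines, hence not $K$-integral, and falls precisely into the case you tried to discard.

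The paper's proof avoids any case distinction by taking two geometric irreducible components $X_1^0,X_2^0$ of the smooth locus $X^0$ (irrespective of how $\Gamma_K$ permutes them) and, via \v{C}ebotarev and Lang--Weil, two places $v_1,v_2\notin S$ with $X_i^0(K_{v_i})\neq\emptyset$; the open set $X_1^0(K_{v_1})\times X_2^0(K_{v_2})\times\prod_{v\neq v_1,v_2}X(K_v)$ then contains no $K$-point, since such a point would lie geometrically on both $X_1^0$ and $X_2^0$, which are disjoint inside the smooth locus. Your argument in the $K$-irreducible case---forcing $X(K)\subset X^{\mathrm{sing}}$ via the transitive Galois action and then producing one smooth local point---is valid and pleasant there, but it genuinely does not cover the reducible case (where $X(K)$ need not be singular at all), and that is exactly the case needed here.
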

	
	\begin{proof}
		%By assumption, we can take a irreducible component $X_1$ of $X$ such that $\dim X_1\geq 1.$ Take another irredu
		Let $X^0$ be the smooth locus of $X.$ Claim that $X^0\subset X$ is an open dense subscheme.
		To see this, we note that, since $X$ is reduced and $K$ is of characteristic $0,$ the scheme $X$ is geometrically reduced. For any geometrically irreducible component of $X,$  by \cite[Chapter II. Corollary 8.16]{Ha97}, its smooth locus is open dense in this geometrically irreducible component. So the claim follows. From this claim, the schemes $X$ and $X^0$ have the same number of geometrically irreducible components.	
		
		By the assumption that $X$ has multiple geometrically irreducible components, let $X_1^0$ and $X_2^0$ be two different geometrically irreducible components of $X^0,$ defined over the number fields $K_1$ and $K_2$ respectively. By the Lang-Weil estimate \cite{LW54}, the varieties $X_1^0$ and $X_2^0$ have local points for almost all places of $K_1$ and $K_2$ respectively. By the \v{C}ebotarev density theorem, we can take two different places $v_1,v_2\in \Omega_K^f\backslash S$ such that $v_1,v_2$ split in $K_1$ and also in $K_2,$ and that $X_1^0(K_{v_1})\neq\emptyset$ and $X_2^0(K_{v_2})\neq\emptyset.$  Since $\prod_{v\in \Omega_K}X(K_v)\neq \emptyset,$ we consider a nonempty open subset $L=X_1^0(K_{v_1})\times X_2^0(K_{v_2})\times\prod_{v\in \Omega_K\backslash \{v_1,v_2\}}X(K_v)\subset \prod_{v\in \Omega_K}X(K_v).$ Since $X^0$ is smooth, and the varieties $X_1^0,~X_2^0$ are different geometrically irreducible components, we have $X_1^0(K_{v_1})\cap X_2^0(K_{v_1})=\emptyset,$ which implies $X(K)\cap L=\emptyset.$ Hence $X$ does not satisfy weak approximation off $S.$
	\end{proof}
	
	The following two lemmas state that two projective lines meeting at one point will violate weak approximation with Brauer-Manin obstruction.
	
	\begin{lemma}\label{lemma two projective lines Brauer group}
		Let $C$ be the curve defined over a number field $K$ by the homogeneous equation:
		$x_0^2-x_1^2=0$ in $\PP^2$ with homogeneous coordinates $(x_0:x_1:x_2).$ Then the natural restriction map $\Br(K)\to \Br(C),$ is an isomorphism. 
	\end{lemma}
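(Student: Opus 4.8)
The plan is to recognise $C$ as a nodal curve and to compute $\Br(C) = H^2(C,\GG_m)$ by a Mayer--Vietoris (equivalently, normalisation) sequence in \'etale cohomology. First I would factor the defining equation as $x_0^2 - x_1^2 = (x_0-x_1)(x_0+x_1)$, so that $C = L_1 \cup L_2$ is the union of the two lines $L_1 = \{x_0 = x_1\}$ and $L_2 = \{x_0 = -x_1\}$ in $\PP^2_K$. Each $L_k$ is a copy of $\PP^1_K$, and the scheme-theoretic intersection $L_1 \cap L_2$ is the single reduced $K$-rational point $P = (0:0:1)$, the two lines crossing transversally. Thus $C$ is a proper, geometrically connected $K$-curve with a single node at $P$.

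Next I would write down the exact sequence of \'etale sheaves on $C$ attached to this closed cover,
\[
0 \longrightarrow \GG_{m,C} \longrightarrow i_{1*}\GG_{m,L_1} \oplus i_{2*}\GG_{m,L_2} \longrightarrow i_{0*}\GG_{m,P} \longrightarrow 0,
\]
where $i_1,i_2,i_0$ denote the inclusions of $L_1,L_2,P$ and the second arrow sends $(u,v)$ to $u|_P\,v|_P^{-1}$. Exactness is formal away from $P$; at the node I would verify it on strict henselisations, using that the henselian local ring of a node is the fibre product $\Ocal_{L_1,P_1}^{\mathrm{sh}}\times_{\overline{K}}\Ocal_{L_2,P_2}^{\mathrm{sh}}$ over the common residue field. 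Hence a pair of branch units glues to a unit of $C$ precisely when the two values at $P$ agree, and surjectivity onto $\overline{K}^\times$ follows by taking one branch constant. Since $i_1,i_2,i_0$ are closed immersions they have no higher direct images, so passing to cohomology yields the long exact sequence
\[
\cdots \to H^i(C,\GG_m) \to H^i(L_1,\GG_m)\oplus H^i(L_2,\GG_m) \to H^i(P,\GG_m) \to H^{i+1}(C,\GG_m) \to \cdots.
\]

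Now I would insert the known values $\Br(\PP^1_K)=\Br(K)$, $\Br(P)=\Br(K)$ and $H^1(P,\GG_m)=\operatorname{Pic}(\Spec K)=0$. Because the term $H^1(P,\GG_m)=0$ immediately precedes $H^2(C,\GG_m)$, the map $\Br(C)=H^2(C,\GG_m)\to\Br(L_1)\oplus\Br(L_2)=\Br(K)\oplus\Br(K)$ is injective, and by exactness its image is the kernel of the difference map $(\alpha,\beta)\mapsto\alpha-\beta$, that is, the diagonal copy of $\Br(K)$. To identify this with the natural restriction map, I would note that the image in $\Br(C)$ of a class $\alpha\in\Br(K)$ restricts on each component to the constant class $\alpha\in\Br(\PP^1_K)=\Br(K)$, hence maps to $(\alpha,\alpha)$; so $\Br(K)\to\Br(C)$ followed by the embedding is the diagonal isomorphism, which forces $\Br(K)\to\Br(C)$ itself to be an isomorphism.

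The step I expect to be the main obstacle is the local one: checking exactness of the sheaf sequence at the node, and, more pointedly, confirming that each restriction $\Br(\PP^1_K)\to\Br(P)=\Br(K)$ is the identity under the standard identification $\Br(\PP^1_K)=\Br(K)$. This is exactly what pins the image of $\Br(C)$ to the diagonal rather than to some other copy of $\Br(K)$, and it is the point that makes the structure map an isomorphism and not merely an abstract identification of groups.
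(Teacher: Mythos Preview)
Your argument is correct and follows essentially the same route as the paper: both set up the Mayer--Vietoris/normalisation short exact sequence of \'etale sheaves for the two lines meeting at the rational point $P$, pass to cohomology, and read off $\Br(C)$ as the diagonal in $\Br(K)\oplus\Br(K)$. The only technical difference is that the paper observes the sheaf sequence \emph{splits} (because $P$ is $K$-rational), giving short exact sequences in every degree, whereas you use the long exact sequence together with $H^1(P,\GG_m)=0$; these are equivalent devices. Your version is in fact slightly more careful on one point: you verify explicitly that the resulting isomorphism is the natural restriction map $\Br(K)\to\Br(C)$, which the paper's proof leaves implicit.
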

	
	\begin{proof}
		Let $C_1$ and $C_2$ be two irreducible components of $C.$ Let $i_1,$ $i_2$ and $i_3$ be the natural embeddings of $C_1,$  $C_2$ and $C_1\cap C_2$ in $C$ respectively. Then we have the following sequence of \'etale sheaves on $C\colon$
		$$0\to \Ocal_C\to i_{1*}\Ocal_{C_1}\oplus i_{2*}\Ocal_{C_2}\to i_{3*}\Ocal_{C_1\cap C_2}\to 0,$$
		where the map $i_{2*}\Ocal_{C_2}\to i_{3*}\Ocal_{C_1\cap C_2}$ is the opposite of the restriction map, and the other maps are canonical restriction maps. By checking the exactness of this sequence at each geometric point of $C,$ and using \cite[Chapter II. Theorem 2.15]{Mi80}, it is exact.
		It gives rise to an exact sequence of \'etale sheaves on $C\colon$
		$$	0\to\GG_{m,C}\to i_{1*}\GG_{m,C_1}\oplus i_{2*}\GG_{m,C_2}\to i_{3*}\GG_{m,C_1\cap C_2}\to 0.$$
		Since the intersection $C_1\cap C_2$ is a rational point, this sequence splits. Using \'etale cohomology, for any integer $n\geq 0,$ we have an exact sequence:
		$$0\to H^n_{\et}(C,\GG_{m})\to H^n_{\et}(C,i_{1*}\GG_{m,C_1}\oplus i_{2*}\GG_{m,C_2})\to H^n_{\et}(C,i_{3*}\GG_{m,C_1\cap C_2})\to 0.$$
		Since $i_1,$ $i_2$ and $i_3$ are closed embeddings, by \cite[Chapter II. Corollary 3.6]{Mi80}, the functors $i_{1*},$ $i_{2*}$ and $i_{3*}$ are exact. Since $C_1$ and $C_2$ are isomorphic to $\PP^1,$ we have the following commutative diagram:
		$$ \xymatrix{
			0\ar[r]& H^n_{\et}(C,\GG_{m})\ar[r]\ar@{=}[d]& H^n_{\et}(C,i_{1*}\GG_{m,C_1}\oplus i_{2*}\GG_{m,C_2})\ar[r]\ar[d]^{\cong}& H^n_{\et}(C,i_{3*}\GG_{m,C_1\cap C_2})\ar[r]\ar[d]^{\cong}& 0\\
			0\ar[r]& H^n_{\et}(C,\GG_{m})\ar[r]& H^n_{\et}(\PP^1,\GG_{m})\oplus H^n_{\et}(\PP^1,\GG_{m})\ar[r]& H^n(\Gamma_K,\overline{K}^\times)\ar[r]& 0\\
		}$$
		with exact rows. By taking $n=2,$ we have an exact sequence: $$0\to \Br(C)\to \Br(K) \oplus \Br(K) \to \Br(K)\to 0.$$ So we have $\Br(K)\cong \Br(C).$	
	\end{proof}
	
	\begin{remark}
		In \cite{HS14}, Harpaz and Skorobogatov used another exact sequence of \'etale sheaves on $C$ (cf. Proposition 1.1 in loc. cit.) to calculate the Brauer group of $C.$ By an easy computation, this lemma can be gotten from their Corollary 1.5 in loc. cit.
	\end{remark}

	\begin{lemma}\label{lemma two projective lines not satifying WA}
		Let $K$ be a number field, and let $S\subset \Omega_K$ be a finite subset. Let $C$ be the curve defined over $K$ by the homogeneous equation:
		$x_0^2-x_1^2=0$ in $\PP^2$ with homogeneous coordinates $(x_0:x_1:x_2).$ Then 
		the curve $C$ does not satisfy weak approximation with Brauer-Manin obstruction off $S.$
	\end{lemma}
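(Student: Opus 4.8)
The plan is to combine the Brauer-group computation of Lemma \ref{lemma two projective lines Brauer group} with the failure of weak approximation established in Lemma \ref{lemma prevariety not satifying WA}. First I would record the geometry: the equation $x_0^2-x_1^2=(x_0-x_1)(x_0+x_1)=0$ exhibits $C$ as the union of the two projective lines $x_0=x_1$ and $x_0=-x_1$, each defined over $K$ and each isomorphic to $\PP^1$, meeting at the single $K$-rational point $(0:0:1)$. In particular $C$ has a $K$-point, so $C(K_v)\neq\emptyset$ for every $v$ and $\prod_{v\in\Omega_K}C(K_v)\neq\emptyset$; moreover $C$ has two geometrically irreducible components, hence it is not a $K$-variety.

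The key step is to show that the \BMo on $C$ is vacuous, i.e. that $C(\AA_K)^{\Br}=C(\AA_K)$. By Lemma \ref{lemma two projective lines Brauer group} the restriction $\Br(K)\to\Br(C)$ is surjective, so every class $A\in\Br(C)$ is the pullback of some $a\in\Br(K)$. For any adelic point $(P_v)\in C(\AA_K)$ the local evaluation $A(P_v)\in\Br(K_v)$ is then the image of $a$ under $\Br(K)\to\Br(K_v)$, whence $\inv_v(A(P_v))=\inv_v(a)$ and
$$\sum_{v\in\Omega_K}\inv_v(A(P_v))=\sum_{v\in\Omega_K}\inv_v(a)=0$$
by the reciprocity law of global class field theory. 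Thus every adelic point is orthogonal to every class of $\Br(C)$, giving $C(\AA_K)^{\Br}=C(\AA_K)$ and hence $\pr^S(C(\AA_K)^{\Br})=\pr^S(C(\AA_K))=C(\AA_K^S)$.

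Finally I would conclude by reduction to plain weak approximation: with the identification above, $C$ satisfies weak approximation with \BMo off $S$ if and only if $C(K)$ is dense in $C(\AA_K^S)$, that is, if and only if $C$ satisfies weak approximation off $S$. Since $C$ is a reduced proper $K$-scheme with two geometrically irreducible components and $\prod_{v\in\Omega_K}C(K_v)\neq\emptyset$, Lemma \ref{lemma prevariety not satifying WA} shows that $C$ fails weak approximation off $S$, which gives the claim. The only point requiring care is the passage through the constant classes: one must confirm that the Brauer-Manin pairing is defined and behaves functorially on the reducible curve $C$, and that the reciprocity computation applies verbatim. No genuine obstacle arises here, precisely because $\Br(C)$ is entirely constant; the main conceptual content is already packaged in the two preceding lemmas, so the present statement is essentially their formal combination.
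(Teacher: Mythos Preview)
Your proposal is correct and follows essentially the same approach as the paper: use Lemma \ref{lemma two projective lines Brauer group} to see that $\Br(C)$ consists of constant classes, so the Brauer--Manin set equals the full adelic set, and then invoke Lemma \ref{lemma prevariety not satifying WA} to conclude that plain weak approximation off $S$ fails. The paper's proof is the same two-line combination of these lemmas; you have simply spelled out the reciprocity computation and the geometry of the two lines in more detail.
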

	
	\begin{proof}
		Since the curve $C$ has $K$-rational points and two irreducible components, by Lemma \ref{lemma prevariety not satifying WA}, it does not satisfy weak approximation off $S.$ By Lemma \ref{lemma two projective lines Brauer group}, we have $\Br(K)\cong \Br(C).$ So the curve $C$ does not satisfy weak approximation with Brauer-Manin obstruction off $S.$
	\end{proof}

	\begin{theorem}\label{theorem main result: non-invariance of weak approximation with BMO}
		For any nontrivial extension of number fields $L/K,$ assuming that Conjecture \ref{conjecture Stoll} holds over $K,$ there exists a smooth, projective, and geometrically connected surface  $X$ defined over $K$ such that
		\begin{itemize}
			\item the surface $X$ has a $K$-rational point, and satisfies weak approximation with Brauer-Manin obstruction %, \'etale Brauer-Manin obstruction
			off $\infty_K,$
			\item the surface $X_L$ does not satisfy weak approximation with Brauer-Manin obstruction %, \'etale Brauer-Manin obstruction
			off $T$ for any finite subset $T\subset \Omega_L.$ 
		\end{itemize}
	\end{theorem}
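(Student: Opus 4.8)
The plan is to build $X$ as a conic bundle over a carefully chosen curve, and then to run the two fibration lemmas against each other: Lemma \ref{lemma fiber criterion for wabm} over $K$ for the positive statement, and Lemma \ref{lemma fiber criterion for not wabm} over $L$ for the negative one. First I would apply Lemma \ref{lemma stoll conjecture} to obtain a smooth, projective, geometrically connected curve $C$ over $K$ with $(C,K,L)$ of type $I$ (Definition \ref{definition curve of type}); thus $C(K)$ and $C(L)$ are finite, $C(K)\subsetneq C(L)$, and Conjecture \ref{conjecture Stoll} holds for $C$, so in particular $C(K)=\pr^{\infty_K}(C(\AA_K)^{\Br})$. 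I would then fix the conic bundle $\mathcal{Q}\subset\PP^1\times\PP^2$ defined by
\[
u_0(x_0^2-x_1^2)+u_1(x_0^2+x_1^2-x_2^2)=0,
\]
whose fiber over $\infty=[0:1]$ is the smooth conic $C_\infty=\{x_0^2+x_1^2-x_2^2=0\}$, carrying the $K$-point $(1:0:1)$ and hence $K$-isomorphic to $\PP^1$, and whose fiber over $0=[1:0]$ is the pair of lines $C_0=\{x_0^2-x_1^2=0\}$.

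Next I would apply Lemma \ref{lemma choose base change morphism} with $R=\{0,\infty,[1:1],[1:-1]\}$, the last two points being the remaining zeros of the discriminant $u_1(u_0^2-u_1^2)$ of $\mathcal{Q}$, to produce a dominant $K$-morphism $\gamma\colon C\to\PP^1$ which is \'etale over $R$ and satisfies $\gamma(C(K))=\{\infty\}$ and $\gamma(C(L)\setminus C(K))=\{0\}$. Setting $X=C\times_{\gamma,\PP^1}\mathcal{Q}$ with projection $\beta\colon X\to C$, one has $\beta^{-1}(P)\cong\mathcal{Q}_{\gamma(P)}$ for every point $P$ of $C$; writing $C$ inside a $\PP^2$ and expressing $\gamma$ by two forms recovers the shape of the explicit equations of Section \ref{subsection main example1}. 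I would then verify that $X$ is a smooth, projective, geometrically connected surface: projectivity is inherited from $C\times\PP^2$; geometric integrality (hence connectedness) follows because the generic fiber of $\beta$ is a smooth conic, geometrically integral, over the geometrically integral base $C$; and smoothness follows because $\beta$ is smooth over the locus of smooth fibers of $\mathcal{Q}$, while over the degenerate fibers the \'etaleness of $\gamma$ identifies $X$ \'etale-locally with $\mathcal{Q}$, which I will have checked to be smooth. A $K$-point is given by $(P,(1:0:1))$ for any $P\in C(K)$.

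For the first bullet I would apply Lemma \ref{lemma fiber criterion for wabm} to $\beta$ with $S=\infty_K$: condition (1) is the finiteness of $C(K)$; condition (2) is weak approximation with Brauer-Manin obstruction off $\infty_K$ for $C$, which holds because $C(K)$ is finite and equals $\pr^{\infty_K}(C(\AA_K)^{\Br})$ by Conjecture \ref{conjecture Stoll}; and condition (3) holds because each fiber over $C(K)$ is $C_\infty\cong\PP^1$, which satisfies weak approximation. For the second bullet, given any finite $T\subset\Omega_L$, I would apply Lemma \ref{lemma fiber criterion for not wabm} to $\beta_L\colon X_L\to C_L$: the set $C_L(L)=C(L)$ is finite, and for any $P\in C(L)\setminus C(K)$ (nonempty as $C(K)\subsetneq C(L)$) the fiber is $(C_0)_L=\{x_0^2-x_1^2=0\}$, which by Lemma \ref{lemma two projective lines not satifying WA} fails weak approximation with Brauer-Manin obstruction off $T$. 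Since $T$ is arbitrary, $X_L$ fails the property off every finite set.

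The step I expect to be the main obstacle is confirming that $X$ is genuinely a smooth surface. Once this is secured the rest is formal, but to apply the fibration lemmas I must know that the chosen conic bundle has discriminant with only simple roots, that every degenerate fiber is a reduced union of two distinct lines (rank two, not a double line), and that $\gamma$ is \'etale over exactly those degenerate points; only then does the pullback $C\times_{\gamma,\PP^1}\mathcal{Q}$ remain smooth. Checking the smoothness of $\mathcal{Q}$ and its preservation under the \'etale-over-$R$ pullback is the technical heart of the argument, and it is exactly what the explicit computation in Section \ref{subsection main example1} carries out by hand.
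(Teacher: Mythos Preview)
Your proposal is correct and follows essentially the same approach as the paper: build the conic bundle $X'\subset\PP^1\times\PP^2$ with smooth fiber $C_\infty$ over one point and the reducible pair of lines $C_0$ over another, pull back along $\gamma\colon C\to\PP^1$ from Lemma \ref{lemma choose base change morphism} (\'etale over the degenerate locus), and then apply Lemmas \ref{lemma fiber criterion for wabm} and \ref{lemma fiber criterion for not wabm} together with Lemma \ref{lemma two projective lines not satifying WA}. The only noteworthy deviation is in the verification that $X$ is geometrically connected: the paper argues via ampleness of $\Lscr(X)$ on $C\times\PP^2$ and \cite[Chapter III. Corollary 7.9]{Ha97}, whereas you invoke irreducibility of the generic fiber over the irreducible base, which also works once smoothness and equidimensionality are in hand.
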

	
	\begin{proof}
		We will construct a smooth, projective, and geometrically connected surface $X.$ Let $C_\infty$ be the projective line defined over $K$ by the homogeneous equation:
		$x_0^2+x_1^2-x_2^2=0$ in $\PP^2$ with homogeneous coordinates $(x_0:x_1:x_2).$ Let $C_0$ be the curve defined over $K$ by the homogeneous equation:
		$x_0^2-x_1^2=0$ in $\PP^2$ with homogeneous coordinates $(x_0:x_1:x_2).$
		Let $(u_0:u_1)\times(x_0:x_1:x_2)$ be the coordinates of $\PP^1\times\PP^2,$ and let $s'=u_0(x_0^2+x_1^2-x_2^2)+u_1(x_0^2-x_1^2)\in \Gamma(\PP^1\times\PP^2,\Ocal(1,2)).$ Let $X'$ be the locus defined by $s'=0$ in $\PP^1\times\PP^2.$ Since the curves $C_\infty$ and $C_0$ meet transversally, the locus $X'$ is smooth. Let $R$ be the locus over which the composition $ X'\hookrightarrow \PP^1\times\PP^2  \stackrel{pr_1}\to\PP^1$ is not smooth.  Then by \cite[Chapter III. Corollary 10.7]{Ha97}, it is finite over $K.$ 
		By the assumption that Conjecture \ref{conjecture Stoll} holds over $K,$ and Lemma \ref{lemma stoll conjecture}, we can take a smooth, projective, and geometrically connected curve $C$ defined over $K$ such that the triple $(C,K,L)$ is of type $I.$ By Lemma \ref{lemma choose base change morphism}, we can choose a $K$-morphism $\gamma\colon  C\to \PP^1$ such that $\gamma(C(L)\backslash C(K))=\{0\}\subset \PP^1(K),$ $\gamma(C(K))=\{\infty\}\subset \PP^1(K),$ and that $\gamma$ is \'etale over $R.$ 
		Let $B=C\times \PP^2,$ and let $(\gamma,id)\colon B\to\PP^1\times \PP^2.$  Let $\Lcal=(\gamma,id)^*\Ocal(1,2),$ and let $s=(\gamma,id)^* (s')\in \Gamma(B,\Lcal).$ Let $X$ be the zero locus of $s$ in $B.$
		Since $\gamma$ is \'etale over the locus  $R,$ the surface $X$ is smooth.
		Since $X$ is defined by the support of the global section $s,$ it is an effective divisor. The invertible sheaf $\Lscr (X')$ on $\PP^1\times\PP^2$ is isomorphic to $\Ocal(1,2),$ which is a very ample sheaf on $\PP^1\times\PP^2.$ And $(\gamma,id)$ is a finite morphism, so the pull back of this ample sheaf is again ample, which implies that the invertible sheaf $\Lscr (X)$ on $C\times\PP^2$ is ample. By \cite[Chapter III. Corollary 7.9]{Ha97}, the surface $X$ is geometrically connected. So the surface $X$ is smooth, projective, and geometrically connected. 
		Let $\beta\colon X \hookrightarrow B=C\times \PP^2 \stackrel{pr_1}\to C$ be the composition morphism. 
		By our construction, we have the following commutative diagram:
			$$\xymatrix{
				X \ar@{^(->}[d]\ar[r] \ar@/_2.5pc/[dd]_{\beta} & X'  \ar@{^(->}[d]  \\
				C\times \PP^2 \ar^{pr_1}[d] \ar^{(\gamma,id)}[r]&  \PP^1\times \PP^2 \ar^{pr_1}[d]\\
				C \ar^{\gamma}[r] & \PP^1  
			}$$

		Next, we will check that the surface $X$ has the properties.
		
		We will show that  $X$ has a $K$-rational point. For any $P\in C(K),$ we have  $\beta^{-1}(P)\cong C_\infty.$ The projective line $C_\infty$ has a $K$-rational point, so $X(K)\neq \emptyset.$\\
		%For $\Br(C_\infty)/\Br(K)=0,$ according to \cite[Theorem B]{CTSSD87a,CTSSD87b},
		We will show that  $X$ satisfies weak approximation with Brauer-Manin obstruction %, \'etale Brauer-Manin obstruction 
		off $\infty_K.$ 
		We consider the morphism $\beta.$ Since the projective line $C_\infty$ satisfies weak approximation, also weak approximation off $\infty_K,$	Assumption (\ref{fiber criterion for wabm condition 3}) of Lemma \ref{lemma fiber criterion for wabm} holds.
		Since Conjecture \ref{conjecture Stoll} holds for the curve $C,$ using Lemma \ref{lemma fiber criterion for wabm} for the morphism $\beta,$ the surface $X$ satisfies weak approximation with Brauer-Manin obstruction off $\infty_K.$ %, \'etals
		
		For any finite subset $T\subset \Omega_L,$ we will show that  $X_L$ does not satisfy weak approximation with Brauer-Manin obstruction %, \'etale Brauer-Manin obstruction 
		off $T.$ We take a point  $Q\in C(L)\backslash C(K).$ By the choice of the curve $C$ and morphism $\beta,$ the fiber $\beta^{-1}(Q)$ is isomorphic to $C_{0L}.$
		By Lemma \ref{lemma two projective lines not satifying WA}, the curve $C_{0L}$ does not satisfy weak approximation with Brauer-Manin obstruction off $T\cup \infty_L.$ By Lemma \ref{lemma fiber criterion for not wabm}, the surface $X_L$ does not satisfy weak approximation with Brauer-Manin obstruction %, \'etale Brauer-Manin obstruction 
		off $T\cup \infty_L.$ So it does not satisfy weak approximation with Brauer-Manin obstruction %, \'etale Brauer-Manin obstruction 
		off $T.$
	\end{proof}

	\section{An Explicit unconditional example}\label{subsection main example1}

	In this section, let $K=\QQ$ and $L=\QQ(i).$ For this extension $L/K,$ we will give an explicit example without assuming Conjecture \ref{conjecture Stoll} for Theorem \ref{theorem main result: non-invariance of weak approximation with BMO}.

	\subsection{Choosing an elliptic curve} For the extension $L/K,$ we will choose an elliptic curve such that the triple $(E,K,L)$ is of type $I.$ Let $E$ be the elliptic curve defined over $\QQ$ by the homogeneous equation:
	$$w_1^2w_2=w_0^3-16w_2^3$$ in $\PP^2$ with homogeneous coordinates $(w_0:w_1:w_2).$ Its quadratic twist $E^{(-1)}$ is isomorphic to an elliptic curve defined by a homogeneous equation:
	$w_1^2w_2=w_0^3+16w_2^3.$ The elliptic curves $E$ and $E^{(-1)}$ over $\QQ,$ are  of analytic rank $0.$ Then the Tate-Shafarevich group $\Sha(E,K)$ is finite, so the curve $E$ satisfies weak approximation with Brauer-Manin obstruction off $\infty_K.$ The Mordell-Weil groups $E(K)$ and $E^{(-1)}(K)$ are both finite, so the group $E(L)$ is finite. Using \cite[SageMath]{St12}, we check that $E(K)=\{(0:1:0)\}$ and $E(L)=\{(0:\pm 4i:1),(0:1:0)\}.$ So the triple $(E,K,L)$ is of type $I.$

	\subsection{Choosing a dominant morphism}
	We choose the following dominant morphism from the elliptic curve $E$ to $\PP^1,$ which satisfies some conditions of Lemma \ref{lemma choose base change morphism}.

	Let $\PP^2\backslash\{(1:0:0),(-16:0:1),(-1:\pm \sqrt{15}i:1)\}\to \PP^1$ be a morphism over $\QQ$ given by $(w_0:w_1:w_2)\mapsto(w_0w_2+w_1^2+16w_2^2:w_0w_1+w_1w_2).$ Composing  the natural inclusion $E\hookrightarrow \PP^2\backslash\{(1:0:0),(-16:0:1),(-1:\pm \sqrt{15}i:1)\}$ with it, we get a morphism $\gamma\colon E\to \PP^1,$ which is a dominant morphism of degree $6.$ The dominant morphism $\gamma$ maps $E(K)$ to $\{\infty\}=\{(1:0)\},$ and maps $(0:\pm 4i:1)$ to $0:=(0:1).$  By B\'ezout's Theorem \cite[Chapter I. Corollary 7.8]{Ha97} and calculation, the branch locus of $\gamma$  is contained in $\PP^1\backslash\{\infty\}.$ Let $(u_0:1)\in \PP^1$ be a branch point of $\gamma.$ For fixed $u_0,$ we use the Jacobian criterion for the intersection of two curves $E$ and $w_0w_2+w_1^2+16w_2^2=(w_0w_1+w_1w_2)u_0$ in $\PP^2.$ Since the point $(0:1:0)\in \PP^2$ is not in this intersection, we 
	let $w_2=1$ to dehomogenize these two curves. By the Jacobian criterion, the branch locus satisfies the following equations:
	\begin{equation*}
		\begin{cases}
			w_1^2=w_0^3-16\\
			w_1^2+w_0+16=w_1(w_0+1)u_0\\
			3(2w_1-w_0u_0-u_0)w_0^2+2w_1(1-w_1u_0)=0.
		\end{cases}
	\end{equation*}
	Then %we have $\{w_0|w_0^6 + 3w_0^5 - 3w_0^4 - 65w_0^3 - 96w_0^2 - 32=0\}.$ By $w_1^2=w_0^3-16,$ we have $\{w_1|w_1^{12} - 45w_1^{10} - 99w_1^8 + 66879w_1^6 - 723600w_1^4 - 55302912w_1^2 - 483176448=0\}.$ And $u_0=\frac{(w_0^3+w_0)}{(w_0+1)w_1}.$ Let $u_0$ be a branch point, then the degree $[\QQ(u_0):\QQ]=12.$ By Hurwitz's Theorem \cite[Chapter IV. Corollary 2.4]{Ha97}, 
	the branch locus equals 
	$$\left\{(u_0:1)\big{|} u_0^{12} + \frac{60627u_0^{10}}{4913} + \frac{159828u_0^8}{4913} - \frac{3505917u_0^6}{19652} - \frac{42057961u_0^4}{58956} + \frac{76076u_0^2}{14739} - \frac{4112}{132651}=0\right\}.$$ Let $(u_0:1)$ be a branch point, then the degree $[\QQ(u_0):\QQ]$ equals $12.$
	% consists of 12-points. So the points of this branch locus conjugate to each other over $\QQ.$

	\subsection{Construction of a smooth, projective, and geometrically connected surface}
	We will construct a smooth, projective, and geometrically connected surface $X$ as in Theorem \ref{theorem main result: non-invariance of weak approximation with BMO}. 
	Let $(u_0:u_1)\times(x_0:x_1:x_2)$ be the coordinates of $\PP^1\times\PP^2,$ and let $s'=u_0(x_0^2+x_1^2-x_2^2)+u_1(x_0^2-x_1^2)\in \Gamma(\PP^1\times\PP^2,\Ocal(1,2)).$ The locus $X'$ defined by $s'=0$ in $\PP^1\times\PP^2$ is smooth. 
	Let $R$ be the locus over which the composition $ X'\hookrightarrow \PP^1\times\PP^2  \stackrel{pr_1}\to\PP^1$ is not smooth.  By calculation, 
	$R=\{(0:1),(\pm 1:1)\}.$ 
	Let $B=E\times \PP^2,$ and let $(\gamma,id)\colon B\to\PP^1\times \PP^2.$ Let $\Lcal=(\gamma,id)^*\Ocal(1,2),$ and let $s=(\gamma,id)^* (s')\in \Gamma(B,\Lcal).$ Let $X$ be the zero locus of $s$ in $B.$
	Since the locus $R$
	does not intersect with the branch locus of  $\gamma\colon E\to \PP^1,$ the surface $X$ is smooth.
	So it is smooth, projective, and geometrically connected. 
	By our construction, the surface $X$ is defined by the following equations: 
	\begin{equation*}
		\begin{cases}
			(w_0w_2+w_1^2+16w_2^2)(x_0^2+x_1^2-x_2^2)+(w_0w_1+w_1w_2)(x_0^2-x_1^2)=0\\ 
			w_1^2w_2=w_0^3-16w_2^3
		\end{cases}
	\end{equation*}
	in $\PP^2\times \PP^2$ with bi-homogeneous coordinates $(w_0:w_1:w_2)\times(x_0:x_1:x_2).$
	For this surface $X,$ we have the following proposition.

	\begin{prop}\label{example1: main result: non-invariance of weak approximation with BMO}
		For $K=\QQ$ and $L=\QQ(i),$  the smooth, projective, and geometrically connected surface $X$ has the following properties.	
		\begin{itemize}
			\item The surface $X$ has a $K$-rational point, and satisfies weak approximation with Brauer-Manin obstruction %, \'etale Brauer-Manin obstruction 
			off $\infty_K.$
			\item The surface $X_L$ does not satisfy weak approximation with Brauer-Manin obstruction %, \'etale Brauer-Manin obstruction
			off $T$ for any finite subset $T\subset \Omega_L.$ 
		\end{itemize}
	\end{prop}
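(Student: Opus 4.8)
The plan is to recognize this proposition as the explicit, unconditional realization of Theorem \ref{theorem main result: non-invariance of weak approximation with BMO} for the data $K=\QQ$, $L=\QQ(i)$, and to rerun that theorem's fibration argument verbatim with the abstract curve $C$ replaced by the specific elliptic curve $E$. All the geometric ingredients have already been assembled in the preceding subsections: the morphism $\gamma\colon E\to\PP^1$ sends $E(K)$ to $\infty$ and the extra points $(0:\pm 4i:1)$ of $E(L)\setminus E(K)$ to $0$, and it is \'etale over $R=\{(0:1),(\pm1:1)\}$ because every branch point $(u_0:1)$ satisfies $[\QQ(u_0):\QQ]=12$, so the rational locus $R$ avoids the branch locus; consequently the zero locus $X$ of $s$ in $B=E\times\PP^2$ is smooth, projective, and geometrically connected, and the projection $\beta\colon X\to E$ has fiber $C_\infty$ (the smooth conic $x_0^2+x_1^2-x_2^2=0$, so $C_\infty\cong\PP^1$) over each point of $E(K)$ and fiber $C_0$ (the pair of lines $x_0^2-x_1^2=0$) over each point of $E(L)\setminus E(K)$. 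The one input that must be secured without assuming Conjecture \ref{conjecture Stoll} is that the triple $(E,K,L)$ is of type $I$ in the sense of Definition \ref{definition curve of type}; I would establish this first.

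For type $I$, I would invoke the reasoning recorded in the remark following Conjecture \ref{conjecture Stoll}: since $E$ and its quadratic twist $E^{(-1)}$ both have analytic rank $0$, their Mordell--Weil and Tate--Shafarevich groups are finite, and the Cassels--Tate dual exact sequence then yields Conjecture \ref{conjecture Stoll} unconditionally for $E$; in particular $E$ satisfies weak approximation with Brauer--Manin obstruction off $\infty_K$. Finiteness of $E^{(-1)}(K)$ forces $E(L)$ to be finite, and the point counts $E(K)=\{(0:1:0)\}$ and $E(L)=\{(0:\pm 4i:1),(0:1:0)\}$ verified in \cite{St12} show that both sets are finite and nonempty with $E(K)\neq E(L)$. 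Hence $(E,K,L)$ is of type $I$.

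With type $I$ in hand, the positive assertions follow from the fibration. A $K$-rational point exists because, for $P\in E(K)$, the fiber $\beta^{-1}(P)\cong C_\infty\cong\PP^1$ carries a $\QQ$-point, for instance $(1:0:1)$. For weak approximation with Brauer--Manin obstruction off $\infty_K$ I would apply Lemma \ref{lemma fiber criterion for wabm} to $\beta\colon X\to E$: hypothesis (\ref{fiber criterion for wabm condition 1}) holds since $E(K)$ is finite, (\ref{fiber criterion for wabm condition 2}) is exactly the statement just proved that $E$ satisfies weak approximation with Brauer--Manin obstruction off $\infty_K$, and (\ref{fiber criterion for wabm condition 3}) holds because each fiber $C_\infty\cong\PP^1$ satisfies weak approximation, hence weak approximation off $\infty_K$.

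For the negative assertion, fix any finite $T\subset\Omega_L$ and take $Q=(0:4i:1)\in E(L)\setminus E(K)$. By the choice of $\gamma$ the fiber $\beta^{-1}(Q)$ is $C_{0L}$, the base change to $L$ of the two-line curve $x_0^2-x_1^2=0$, which by Lemma \ref{lemma two projective lines not satifying WA} fails weak approximation with Brauer--Manin obstruction off $T\cup\infty_L$. Since $E(L)$ is finite, Lemma \ref{lemma fiber criterion for not wabm} applied to $\beta_L\colon X_L\to E_L$ with this $Q$ transfers the failure to $X_L$ off $T\cup\infty_L$; and because satisfying the property off the smaller set $T$ would formally imply satisfying it off the larger set $T\cup\infty_L$, the surface $X_L$ cannot satisfy weak approximation with Brauer--Manin obstruction off $T$. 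The genuinely nontrivial and unconditional step is the type $I$ verification of the second paragraph---specifically the appeal to analytic rank $0$ to guarantee finiteness of $\Sha(E,K)$ and thereby Conjecture \ref{conjecture Stoll} for $E$; every remaining ingredient (smoothness of $X$ via the \'etale locus of $\gamma$ over $R$, and the explicit point counts) is a finite computation already recorded above.
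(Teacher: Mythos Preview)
Your proposal is correct and follows exactly the paper's approach: the paper's own proof is the single sentence ``This is the same as in the proof of Theorem \ref{theorem main result: non-invariance of weak approximation with BMO},'' and you have simply unpacked that reference, together with the type~$I$ verification for $(E,K,L)$ already carried out in the preceding subsection.
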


	\begin{proof}
		This is the same as in the proof of Theorem \ref{theorem main result: non-invariance of weak approximation with BMO}.
	\end{proof}

	\begin{footnotesize}
		\noindent\textbf{Acknowledgements.} The author would like to thank my thesis advisor Y. Liang for proposing the related problems, papers and many fruitful discussions. Thank M. Borovoi for his interested in my work and many helpful suggestions, and A. P\'al for kindly pointing out some inaccuracies. This paper was inspired by the work of Harpaz and Skorobogatov \cite{HS14}. The author was partially supported by the National Natural Science Foundation of
		China (NSFC) Grant No. 12071448 and funded by Open Foundation of Hubei Key Laboratory of Applied Mathematics (Hubei University).
	\end{footnotesize}

% \bib, bibdiv, biblist are defined by the amsrefs package.
\begin{bibdiv}
	\begin{biblist}
		
		\bib{Ha97}{book}{
			author={Hartshorne, R.},
			title={Algebraic geometry},
			series={Graduate Texts in Mathematics},
			publisher={Springer-Verlag},
			date={1997},
			volume={52},
		}
		
		\bib{HS14}{article}{
			author={Harpaz, Y.},
			author={Skorobogatov, A.},
			title={Singular curves and the \'etale {B}rauer-{M}anin obstruction for
				surfaces},
			date={2014},
			journal={Ann. Sci. \'{E}c. Norm. Sup\'er.},
			volume={47},
			pages={765\ndash 778},
		}
		
		\bib{Li21}{article}{
			author={Liang, Y.},
			title={Non-invariance of weak approximation properties under extension
				of the ground field},
			date={2021},
			journal={Michigan Math. J.},
		}
		
		\bib{LW54}{article}{
			author={Lang, S.},
			author={Weil, A.},
			title={Number of points of varieties in finite fields},
			date={1954},
			journal={Amer. J. Math.},
			volume={76},
			pages={819\ndash 827},
		}
		
		\bib{Ma71}{book}{
			author={Manin, Y.},
			title={Le groupe de {B}rauer-{G}rothendieck en g\'eom\'etrie
				diophantienne. {I}n},
			subtitle={Actes du {C}ongr\`es {I}nternational des {M}ath\'ematiciens},
			language={French},
			publisher={Gauthier-Villars},
			date={1971},
			volume={1},
			note={pp. 401-411},
		}
		
		\bib{Mi80}{book}{
			author={Milne, J.},
			title={\'{E}tale cohomology},
			publisher={Princeton University Press},
			date={1980},
		}
		
		\bib{Po10}{article}{
			author={Poonen, B.},
			title={Insufficiency of the {B}rauer-{M}anin obstruction applied to
				\'etale covers},
			date={2010},
			journal={Ann. of Math.},
			volume={171},
			pages={2157\ndash 2169},
		}
		
		\bib{Sc99}{article}{
			author={Scharaschkin, V.},
			title={Local-global problems and the {B}rauer-{M}anin obstruction},
			date={1999},
			journal={Thesis, University of Michigan},
		}
		
		\bib{Sk01}{book}{
			author={Skorobogatov, A.},
			title={Torsors and rational points},
			series={Cambridge Tracts in Mathematics},
			publisher={Cambridge University Press},
			date={2001},
			volume={144},
		}
		
		\bib{St12}{book}{
			author={Stein, W.},
			title={Sage for power users},
			publisher={https://www.sagemath.org/},
			date={2012},
		}
		
		\bib{St07}{article}{
			author={Stoll, M.},
			title={Finite descent obstructions and rational points on curves},
			date={2007},
			journal={Algebra Number Theory},
			volume={1},
			pages={349\ndash 391},
		}
		
		\bib{Wu22e}{article}{
			author={Wu, H.},
			title={Non-invariance of the {H}asse principle with {B}rauer-{M}anin
				obstruction},
			date={2022},
			journal={Manuscripta math.},
		}
		
		\bib{Wu22b}{article}{
			author={Wu, H.},
			title={Non-invariance of weak approximation with {B}rauer-{M}anin
				obstruction},
			date={2022},
			journal={Preprint, arXiv:2203.09858 [math.NT], to appear in Algebra
				Colloq.},
		}
		
	\end{biblist}
\end{bibdiv}

\end{document}